\newtheorem{theorem}{Theorem}[section]
\newtheorem{lemma}[theorem]{Lemma}
\newtheorem{proposition}[theorem]{Proposition}
\theoremstyle{definition}
\newtheorem{definition}[theorem]{Definition}
\newtheorem{question}[theorem]{Question}
\numberwithin{equation}{section}
\newcommand{\be}{\begin{equation}}
\newcommand{\ee}{\end{equation}}
\newcommand{\abs}[1]{\lvert #1\rvert}
\newcommand{\Rn}{{\mathbb R}^n}
\newcommand{\Rt}{{\mathbb R}^2}
\newcommand{\D}{{\mathbb D}}
\newcommand{\psubset}{\varsubsetneq}
 \renewcommand{\phi}{\varphi}
\newcounter{minutes}\setcounter{minutes}{\time}
\newcounter{hours}\setcounter{hours}{\time}
\begin{document}
\def\thefootnote{}
\footnotetext{ \texttt{File:~\jobname .tex,
           printed: \number\year-\number\month-\number\day,
           \thehours.\ifnum\theminutes<10{0}\fi\theminutes}
} \makeatletter\def\thefootnote{\@arabic\c@footnote}\makeatother

\title[Geometric properties of $\varphi$-uniform domains]
{Geometric properties of $\varphi$-uniform domains}
\author[P. H\"ast\"o]{Peter H\"ast\"o}
\address{Department of Mathematical Sciences, P.O.~Box~3000,
90014~University of Oulu, Finland}
\email{peter.hasto@oulu.fi}

\author[R. Kl\'en]{Riku Kl\'en}
\address{Department of Mathematics and Statistics, University of Turku,
FIN-20014 Turku, Finland}
\email{riku.klen@utu.fi}

\author[S. K. Sahoo]{Swadesh Kumar Sahoo}
\address{Discipline of Mathematics, Indian Institute of Technology Indore,
Simrol, Khandwa Road, Indore-453 552, India}
\email{swadesh@iiti.ac.in}

\author[M. Vuorinen]{Matti Vuorinen}
\address{Department of Mathematics and Statistics, University of Turku,
FIN-20014 Turku, Finland}
\email{vuorinen@utu.fi}


\keywords{The distance ratio metric, the quasihyperbolic metric, uniform and $\varphi$-uniform
domains, John domains, quasiconformal and quasisymmetric mappings}
\subjclass[2010]{Primary 30F45; Secondary 30C65, 30L05, 30L10}




\begin{abstract}
We consider proper subdomains $G$ of $\Rn$ and their images $G'=f(G)$
under quasiconformal mappings $f$ of $\Rn$. We compare the distance ratio metrics
of $G$ and $G'$; as an application we show that $\varphi$-uniform domains are preserved under quasiconformal
mappings of $\Rn$. A sufficient condition for $\varphi$-uniformity is
obtained in terms of the quasi-symmetry condition.
We give a geometric condition for uniformity:
If $G\subset\Rn$ is $\phi$-uniform and satisfies the twisted cone condition,
then it is uniform.
We also construct a planar $\phi$-uniform domain whose complement is not
$\psi$-uniform for any $\psi$.
\end{abstract}

\maketitle


\section{Introduction and Main Results}\label{intro}

Classes of subdomains of the Euclidean $n$-space $\mathbb{R}^n$, $n\ge 2$,
occur often in geometric function theory and modern mapping theory.
For instance, the boundary regularity of a conformal mapping
of the unit disk onto a domain $D$ depends on the
properties of $D$ at its boundary. Similar results have been established
for various classes of functions such as quasiconformal mappings and mappings
with finite distortion. In such applications, uniform domains and their
generalizations occur \cite{Ge99,GO79,gh,Ko09,Va-book,Va88,Va91,Va98,Vu-book};
$\phi$-uniform domains have been recently studied in \cite{KLSV14}.

Let $\gamma\colon [0,1]\to G\subset \Rn$ be a path, i.e.\  a continuous
function.
All the paths $\gamma$ 
are assumed to be rectifiable,
that is, to have finite Euclidean length (notation-wise we write $\ell(\gamma) < \infty$).

Let $G\psubset \Rn$ be a domain
and $x,y\in G$. We denote by $\delta_G(x)$, the Euclidean distance from $x$ to the boundary
$\partial G$ of $G$. When the domain is clear, we use
the notation $\delta(x)$.
The {\em $j_G$  metric} (also called the {\em distance ratio metric}) \cite{Vu85}
is defined by
$$ j_G(x,y) := \log\bigg( 1 + \frac{\abs{x-y} }{\delta(x) \wedge \delta(y)} \bigg),
$$
where $a \wedge b=\min\{a,b\}$.
A slightly different form of this metric was studied in \cite{GO79}.
The {\em quasihyperbolic metric} of $G$ is defined by
the quasihyperbolic-length-minimizing property
$$k_G(x,y)=\inf_{\gamma\in \Gamma(x,y)} \ell_k(\gamma),
\quad \ell_k(\gamma) =\int_\gamma \frac{|dz|}{\delta(z)}\,,
$$
where $\ell_k(\gamma)$ is the quasihyperbolic length of
$\gamma$ (cf. \cite{GP76}) and $\Gamma(x,y)$ is the set of all rectifiable curves joining $x$ and $y$ in $G$.
For a given pair of points $x,y\in G,$ the infimum is always
attained \cite{GO79}, i.e., there always exists a quasihyperbolic
geodesic $J_G[x,y]$ which minimizes the above integral,
$k_G(x,y)=\ell_k(J_G[x,y])\,$ and furthermore with the property that
the distance is additive on the geodesic: $k_G(x,y)=$
$k_G(x,z)+k_G(z,y) $ for all $z\in J_G[x,y]\,.$ It also satisfies the monotonicity property:
$k_{G_1}(x,y)\le k_{G_2}(x,y)$ for all $x,y\in G_2\subset G_1$. If the domain $G$ is
emphasized we call $J_G[x,y]$ a $k_G$-geodesic.
Note that for all domains $G$,
\begin{equation}\label{jlek}
  j_G(x,y) \le k_G(x,y)
\end{equation}
for all $x,y \in G$ \cite{GP76}.

In 1979, uniform domains were introduced by Martio and Sarvas \cite{MS79}.
A domain $G\subset \Rn$ is said to be {\em uniform} if there exists
$C\ge 1$ such that for each pair of points $x,y\in G$
there is a path $\gamma\subset G$ with
\begin{enumerate}
\item[(i)] $\ell(\gamma)\le C\,|x-y| $; and
\item[(ii)] $\delta(z)\ge \displaystyle\frac{1}{C} [\ell(\gamma[x,z])\wedge \ell(\gamma[z,y])]$ for all $z\in \gamma$.
\end{enumerate}
Subsequently, Gehring and Osgood \cite{GO79} characterized uniform domains in terms
of an upper bound for the quasihyperbolic metric as follows: a
domain $G$ is {\em uniform} if and only if there exists a constant $C\ge
1$ such that
$$k_G(x,y)\le C j_G(x,y)
$$
for all $x,y\in G$. As a matter of fact, the above inequality
appeared in \cite{GO79} in a form with an additive constant on the
right hand side; it was shown by Vuorinen \cite[2.50]{Vu85} that the
additive constant can be chosen to be $0$.  This observation leads to
the definition of $\varphi$-uniform domains introduced in \cite{Vu85}.

\begin{definition}\label{phiunif} 
Let $\varphi:\, [0,\infty)\to [0,\infty)$ be a strictly increasing homeomorphism
with $\varphi(0)=0$. A domain $G\psubset\Rn$ is said to be {\em $\varphi$-uniform} if
$$k_G(x,y)\le \varphi\left(\frac{|x-y|}{\delta(x) \wedge \delta(y)}\right)
$$
for all $x,y\in G$.
\end{definition}

An example of a $\varphi$-uniform domain which is not uniform is given
in Section~\ref{SecComplement}. That domain has the property that
its complement is not $\psi$-uniform for any $\psi$.

V\"ais\"al\"a has also investigated the class of
 $\varphi$-domains \cite{Va91} (see also \cite{Va98} and references therein)
and pointed out that $\varphi$-uniform domains are nothing but uniform
under the condition that $\varphi$ is a slow function, i.e.
$\varphi(t)/t \to 0$ as $t\to \infty$.

In the above definition, uniform domains are characterized by the quasi-convexity (i) and
twisted-cone (ii) conditions.
In Section~\ref{SecUinf}, we show that the former can be replaced
by $\phi$-uniformity, which may in some situations be easier to
establish.

\begin{theorem}\label{phi-unif+cone}
If a domain $G\psubset \Rn$ is $\varphi$-uniform and
satisfies twisted cone condition, then it is uniform.
\end{theorem}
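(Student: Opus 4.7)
The goal is, given $x,y\in G$, to exhibit a single path realising both the quasi-convexity condition (i) and the twisted cone condition (ii). The twisted cone hypothesis supplies a path $\gamma$ fulfilling (ii) with some constant $C_1$, but its Euclidean length may be arbitrary. My plan is to retain the two endpoint-arcs of $\gamma$, where (ii) forces $\gamma$ to move away from $\partial G$, and to splice in a short shortcut across the middle that $\varphi$-uniformity makes available.

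First dispose of the trivial case $|x-y|<\delta(x)\vee\delta(y)$: then $[x,y]$ lies in a ball $B(y,\delta(y))$ or $B(x,\delta(x))$ contained in $G$, and one checks directly that the segment satisfies both (i) and (ii) with absolute constants. So assume $|x-y|\ge\delta(x)\vee\delta(y)$, let $\gamma$ be a twisted cone path of length $L$, and (since $L\ge|x-y|$ automatically) assume $L>2|x-y|$, as otherwise $\gamma$ already works. Pick $z^\ast,z^{\ast\ast}\in\gamma$ at arclength $|x-y|/2$ from $x$ and from $y$ respectively. The twisted cone condition forces $\delta(z^\ast),\delta(z^{\ast\ast})\ge|x-y|/(2C_1)$, while the $1$-Lipschitz continuity of $\delta$ together with $\delta(x),\delta(y)\le|x-y|$ yields $\delta(z^\ast),\delta(z^{\ast\ast})\le 3|x-y|/2$. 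Since $|z^\ast-z^{\ast\ast}|\le 2|x-y|$, $\varphi$-uniformity applied to the pair $(z^\ast,z^{\ast\ast})$ gives $k_G(z^\ast,z^{\ast\ast})\le\varphi(4C_1)=:\varphi_0$; the standard bound $\delta(z)\le\delta(z^\ast)e^{k_G(z^\ast,z)}$ along the quasihyperbolic geodesic $\sigma:=J_G[z^\ast,z^{\ast\ast}]$ (a consequence of $|\nabla\delta|\le 1$), combined with the symmetric bound from $z^{\ast\ast}$, integrates to $\ell(\sigma)\le 2e^{\varphi_0/2}\sqrt{\delta(z^\ast)\delta(z^{\ast\ast})}\le 3e^{\varphi_0/2}|x-y|$. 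The spliced path $\gamma':=\gamma[x,z^\ast]\cup\sigma\cup\gamma[z^{\ast\ast},y]$ therefore satisfies $\ell(\gamma')\le(1+3e^{\varphi_0/2})|x-y|$, establishing (i).

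The delicate step is verifying (ii) for $\gamma'$. On each flank the minimum $\ell(\gamma'[x,z])\wedge\ell(\gamma'[z,y])$ is realised by the flank itself (the shorter side), so the original twisted cone estimate for $\gamma$ survives with constant $C_1$. On the middle arc $\sigma$ the key ingredient is the one-sided bound $\delta(z)\ge e^{-k_G(z^\ast,z)}\delta(z^\ast)$, which gives $\delta(z)\ge e^{-\varphi_0}|x-y|/(2C_1)$; since the other quantity is at most $\ell(\gamma')/2$, the twisted cone holds on $\sigma$ with constant $C_1(1+3e^{\varphi_0/2})e^{\varphi_0}$. This verification is the main obstacle---the shortcut must be short enough for (i) and still stay deep enough in $G$ for (ii), and this balance is precisely what dictates the choice of arclength $|x-y|/2$.
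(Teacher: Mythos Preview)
Your argument is correct. The structural idea---cut the twisted path at arclength comparable to $|x-y|$ from each end, use the cone condition to land deep in $G$, then invoke $\varphi$-uniformity to control the middle---is the same as the paper's. The execution, however, differs: the paper proves uniformity through the Gehring--Osgood characterisation $k_G\le C\,j_G$, bounding $k_G(x,y)\le k_G(x,x')+k_G(x',y')+k_G(y',y)$ with $x',y'$ taken at arclength $\tfrac1{10}|x-y|$ from each end and estimating each summand by a multiple of $j_G(x,y)$; the middle term is dispatched by $k_G(x',y')\le\varphi(12c)$ directly, with no need to build a curve. You instead construct an explicit Martio--Sarvas path by splicing a quasihyperbolic geodesic $\sigma$ between the cut points $z^\ast,z^{\ast\ast}$, and must then verify (ii) on each piece by hand. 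Your route is more geometric and self-contained (it does not appeal to the equivalence of the two definitions of uniformity), at the cost of the extra bookkeeping---the Euclidean length bound $\ell(\sigma)\le 2e^{\varphi_0/2}\sqrt{\delta(z^\ast)\delta(z^{\ast\ast})}$ and the depth check on $\sigma$. The paper's route is shorter but leans on the $k\le Cj$ characterisation as a black box.
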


Let $G\subset \Rn$ be a domain and $f:\,G \to f(G) \subset \Rn$ be a homeomorphism. The linear
dilatation of $f$ at $x\in G$ is defined by
$$H(f,x):=\limsup_{r\to 0}\frac{\sup\{|f(x)-f(y)|:\, |x-y|=r\}}{\inf\{|f(x)-f(z)|:\, |x-z|=r\}}\,.
$$
We adopt the definition of $K$-quasiconformality from V\"ais\"al\"a \cite{Va-book}. If $f$ is $K$-quasiconformal then
$\sup_{x\in G}H(f,x)\le c(n,K)<\infty$.

In Section~\ref{MainSec} we study $\phi$-uniform domains in relation
to quasiconformal and quasisymmetric mappings.
Gehring and Osgood \cite[Theorem~3 and Corollary~3]{GO79},  proved that uniform domains are invariant under quasiconformal mappings of $\Rn$. Our next theorem extends this result to the case of $\phi$-uniform domains.

\begin{theorem}\label{qc-inv-thm}
Suppose that $G\psubset \Rn$ is a $\varphi$-uniform domain and $f:\mathbb{R}^n \to \mathbb{R}^n$ is a
quasiconformal mapping which maps $G$ onto $G'\psubset \Rn$.
Then $G'$ is $\varphi_1$-uniform for some $\varphi_1$.
\end{theorem}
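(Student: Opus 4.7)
The plan is to combine two distortion results: the quasisymmetric control over the distance-ratio quantity $|x-y|/(\delta(x) \wedge \delta(y))$ and the Gehring--Osgood distortion of the quasihyperbolic metric under quasiconformal maps. Fix $x',y' \in G'$ and set $x = f^{-1}(x')$, $y = f^{-1}(y')$. Recall that a $K$-quasiconformal map of $\Rn$ onto itself is $\eta$-quasisymmetric for some $\eta = \eta_{n,K}$, and the same applies to $f^{-1}$; this will be the workhorse.

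Step 1 (comparing near-boundary distances). Assume without loss of generality that $\delta_G(x) \le \delta_G(y)$. Pick $u \in \partial G$ realizing $\delta_G(x)$ and $w' \in \partial G'$ realizing $\delta_{G'}(x')$, and set $u' = f(u) \in \partial G'$, $w = f^{-1}(w') \in \partial G$. Since $|x'-w'| = \delta_{G'}(x') \le |x'-u'|$, applying the quasisymmetry of $f^{-1}$ to the triple $(x',w',u')$ yields
$$|x - w| \,\le\, \eta(1)\, |x - u| \,=\, \eta(1)\, \delta_G(x),$$
so $|x-w|$ and $\delta_G(x)$ are comparable. A second application of the quasisymmetry of $f^{-1}$, this time to the triple $(x', y', w')$, then gives
$$\frac{|x-y|}{\delta_G(x)} \,\le\, \eta(1) \,\frac{|x-y|}{|x-w|} \,\le\, \eta(1)\,\eta\!\left(\frac{|x'-y'|}{\delta_{G'}(x')}\right) \,\le\, \eta_2\!\left(\frac{|x'-y'|}{\delta_{G'}(x') \wedge \delta_{G'}(y')}\right),$$
where $\eta_2(t) := \eta(1)\,\eta(t)$ is still a strictly increasing self-homeomorphism of $[0,\infty)$ vanishing at $0$.

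Step 2 (applying $\varphi$-uniformity and Gehring--Osgood). By $\varphi$-uniformity of $G$ and monotonicity of $\varphi$,
$$k_G(x,y) \,\le\, \varphi\!\left(\frac{|x-y|}{\delta_G(x) \wedge \delta_G(y)}\right) \,\le\, (\varphi \circ \eta_2)\!\left(\frac{|x'-y'|}{\delta_{G'}(x') \wedge \delta_{G'}(y')}\right).$$
The Gehring--Osgood distortion theorem for $K$-quasiconformal maps supplies a homeomorphism $\psi_{n,K}(t) = c(n,K)\max\{t,\, t^{1/K}\}$ of $[0,\infty)$ with $\psi_{n,K}(0)=0$ satisfying $k_{G'}(f(x),f(y)) \le \psi_{n,K}(k_G(x,y))$. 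Chaining the estimates produces
$$k_{G'}(x',y') \,\le\, (\psi_{n,K} \circ \varphi \circ \eta_2)\!\left(\frac{|x'-y'|}{\delta_{G'}(x') \wedge \delta_{G'}(y')}\right),$$
and $\varphi_1 := \psi_{n,K}\circ\varphi\circ\eta_2$ is the required strictly increasing homeomorphism of $[0,\infty)$ with $\varphi_1(0)=0$.

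The bookkeeping issue worth flagging is Step 1: one must control $|x-y|/\delta_G(x)$ \emph{from above} by a function of $|x'-y'|/\delta_{G'}(x')$, whereas the most naive use of quasisymmetry of $f$ would give the opposite inequality. The trick is to apply quasisymmetry to $f^{-1}$, using a boundary point of $G'$ (not of $G$) as the reference, and first check that $\delta_G(x)$ is comparable to the distance from $x$ to the $f^{-1}$-image of the nearest boundary point of $G'$. Once this symmetrization is in place, the remaining steps are routine composition of monotone homeomorphisms.
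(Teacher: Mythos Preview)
Your argument is correct and establishes the theorem, but it follows a genuinely different route from the paper's proof. The paper stays entirely inside the $j/k$ framework: it first reformulates Gehring--Osgood's additive $j$-distortion estimate (Lemma~\ref{GO-thm4}) into a multiplicative one of the shape $j_G(x,y)\le C\max\{j_{G'}(x',y'),j_{G'}(x',y')^{\alpha}\}$ (Lemma~\ref{j-qc-thm}), and then chains this with the $k$-distortion estimate (Lemma~\ref{GO-thm3}) and the $\varphi$-uniformity hypothesis. Your argument instead invokes the global $\eta$-quasisymmetry of a $K$-quasiconformal self-map of $\Rn$ and controls the raw ratio $|x-y|/\delta_G(x)$ directly by a boundary-point comparison, bypassing the $j$-metric and the case analysis in Lemma~\ref{j-qc-thm} entirely. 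This is arguably cleaner and yields the more transparent formula $\varphi_1=\psi_{n,K}\circ\varphi\circ\eta_2$; the paper's approach, on the other hand, uses only the two Gehring--Osgood theorems already quoted and does not need to import the quasisymmetry of global quasiconformal maps as a separate ingredient. One minor correction: the exponent in the Gehring--Osgood $k$-distortion is $\alpha=K^{1/(1-n)}$, not $1/K$ (these agree only when $n=2$); this does not affect the structure of your proof.
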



\section{Quasiconformal and quasi-symmetric mappings}\label{MainSec}

In general, quasiconformal mappings of a uniform domain do not map onto a uniform domain.
For example, by the Riemann Mapping Theorem, there exists a conformal mapping of the unit disk
$\D=\{z\in\Rt:\, |z|<1\}$ onto the simply connected domain $\D\setminus [0,1)$. Note that the unit
disk $\D$ is ($\varphi$-)uniform whereas the domain $\D\setminus [0,1)$ is not.
However, this changes if we consider quasiconformal mappings of the
whole space $\Rn$: uniform domains are
invariant under quasiconformal mappings of $\Rn$ \cite{GO79}.
In this section we provide the analogue for $\phi$-uniform domains.

We notice that the quasihyperbolic metric and the distance ratio metric
have similar natures in several senses. For instance,
if $f:\,\Rn \to \Rn$ is a M\"obius mapping that takes a domain onto another,
then $f$ is $2$-bilipschitz with respect to the quasihyperbolic metric \cite{GP76}.
Counterpart of this fact with respect to the distance ratio metric can be obtained
from the proof of \cite[Theorem~4]{GO79} with the bilipschitz constant $2$.

\begin{lemma}\label{GO-thm3}\cite[Theorem~3]{GO79}
For $n\ge 2$ and $K\ge 1$ there exists a constant $c$ depending only on $n$ and $K$ such that,
if $f:G\to G'$ is a $K$-quasiconformal mapping of $G\psubset\Rn$ onto $G'\psubset\Rn,$
then
$$k_{G'}(f(x),f(y))\le c\,\max\{k_G(x,y),k_G(x,y)^\alpha\}
$$
for all $x,y\in G$, where $\alpha=K^{1/(1-n)}$.
\end{lemma}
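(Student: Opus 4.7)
The plan is to combine a local Hölder-type bound with a chaining argument along a quasihyperbolic geodesic in $G$. The local estimate will contribute the $k_G^{\alpha}$ piece of the bound, and the chaining argument will contribute the linear $k_G$ piece.

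First I would establish the following local estimate: if $y\in B(x,\delta_G(x)/2)$, then
$$k_{G'}(f(x),f(y))\le C\left(\frac{|x-y|}{\delta_G(x)}\right)^{\alpha}$$
with $\alpha=K^{1/(1-n)}$ and $C=C(n,K)$. The key ingredient is the sharp Hölder distortion estimate for $K$-quasiconformal maps obtained from the modulus (capacity) of the spherical ring condenser with inner point $y$ and outer sphere $\partial B(x,\delta_G(x))$. Since $B(x,\delta_G(x))\subset G$, the image of the outer sphere stays in $G'$, so $\delta_{G'}(f(x))$ is a valid normalization and the capacity comparison yields $|f(x)-f(y)|/\delta_{G'}(f(x))\le C(|x-y|/\delta_G(x))^{\alpha}$. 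For $y$ in the smaller half-ball this forces $f(y)$ into a region where $\delta_{G'}$ is uniformly comparable to $\delta_{G'}(f(x))$, so $k_{G'}(f(x),f(y))\aeq j_{G'}(f(x),f(y))$ and the displayed estimate follows.

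For arbitrary $x,y\in G$ I would take the quasihyperbolic geodesic $J_G[x,y]$ and subdivide it by points $x=x_0,x_1,\dots,x_N=y$ with $|x_i-x_{i+1}|\le\delta_G(x_i)/2$. Each such segment has quasihyperbolic length bounded below by a positive constant, so by additivity of $k_G$ along its geodesic one obtains $N\le 1+c\,k_G(x,y)$. Applying the local estimate on each pair $(x_i,x_{i+1})$ and summing by the triangle inequality for $k_{G'}$ gives $k_{G'}(f(x),f(y))\le c'N\le c''(1+k_G(x,y))$. To conclude, I would split cases: when $k_G(x,y)\ge 1$, this chaining bound already yields $k_{G'}(f(x),f(y))\le c\,k_G(x,y)$; when $k_G(x,y)\le 1$, a bounded chain suffices and the local Hölder estimate applied once (or finitely many times) yields $k_{G'}(f(x),f(y))\le c\,k_G(x,y)^{\alpha}$. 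Combining the two cases is precisely the claimed bound $k_{G'}(f(x),f(y))\le c\max\{k_G(x,y),k_G(x,y)^{\alpha}\}$.

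The main obstacle is the local Hölder estimate with the sharp exponent $\alpha=K^{1/(1-n)}$. Obtaining this exponent requires the capacity of spherical ring condensers together with the quasi-invariance of modulus under $K$-quasiconformal maps; these are classical but nontrivial tools from the analytic theory of quasiconformal mappings. Once the local estimate is in hand, the chaining and the case split are essentially bookkeeping: choosing the subdivision carefully so that the local estimate applies on each piece and the count $N$ is linear in $k_G(x,y)$.
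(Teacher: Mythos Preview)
The paper does not give its own proof of this lemma: it is simply quoted from Gehring and Osgood \cite[Theorem~3]{GO79} and used as a black box. So there is nothing in the paper to compare your argument against.

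That said, your outline is exactly the strategy of the original Gehring--Osgood proof: a local H\"older distortion estimate coming from the modulus of a spherical ring (giving the exponent $\alpha=K^{1/(1-n)}$), a subdivision of the $k_G$-geodesic into Whitney-type pieces on which the local estimate applies, and a case split at $k_G(x,y)\approx 1$ to separate the linear and the $\alpha$-power regimes. One point to tighten: in the small-$k_G$ case you say ``a bounded chain suffices and the local H\"older estimate applied once (or finitely many times)'' gives $c\,k_G(x,y)^\alpha$. For this to work you need to argue that when $k_G(x,y)$ is below a fixed threshold one actually has $|x-y|\le\tfrac12\delta_G(x)$ (so a single application of the local estimate is enough) and that $|x-y|/\delta_G(x)$ and $k_G(x,y)$ are comparable there; both follow from $j_G\le k_G\le 2j_G$ on $B(x,\tfrac12\delta_G(x))$, but as written the step is a little elliptic. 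With that clarification the sketch is sound and matches the cited source.
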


We obtain an analogue of Lemma~\ref{GO-thm3} for $j_G$ with the
help of the following result of Gehring's and Osgood's:

\begin{lemma}\label{GO-thm4}\cite[Theorem~4]{GO79}
For $n\ge 2$ and $K\ge 1$ there exist constants $c_1$ and $d_1$ depending only on $n$ and $K$ such that
if $f \colon \Rn \to \Rn$ is a $K$-quasiconformal mapping which maps $G\psubset \Rn$ onto $G'\psubset \Rn$, then
$$j_{G'}(f(x),f(y))\le c_1\,j_G(x,y)+d_1
$$
for all $x,y\in G$.
\end{lemma}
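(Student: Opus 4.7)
The plan is to reduce the estimate to the quasisymmetry of $f$. A $K$-quasiconformal map $f \colon \Rn \to \Rn$ is globally $\eta$-quasisymmetric (a theorem of Gehring), with explicit power-type control
$$\eta(t) \le \lambda_n \max\{t^\alpha, t^{1/\alpha}\}, \qquad \alpha = K^{1/(1-n)},$$
for some $\lambda_n$ depending only on $n$. I would take this as input, since it is precisely what produces the additive-linear form of the target bound.

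Fix $x, y \in G$ and assume without loss of generality that $\delta_G(x) \le \delta_G(y)$. Set $t = |x-y|/\delta_G(x)$, so that $j_G(x,y) = \log(1+t)$. The main step is to show
$$\frac{|f(x)-f(y)|}{\delta_{G'}(f(x)) \wedge \delta_{G'}(f(y))} \le \eta(t),$$
which I would prove by a short case split. If $\delta_{G'}(f(x)) \le \delta_{G'}(f(y))$, pick $w \in \partial G'$ with $|f(x)-w| = \delta_{G'}(f(x))$, set $z = f^{-1}(w) \in \partial G$, note $|x-z| \ge \delta_G(x)$, and apply quasisymmetry at base point $x$:
$$\frac{|f(x)-f(y)|}{\delta_{G'}(f(x))} = \frac{|f(x)-f(y)|}{|f(x)-f(z)|} \le \eta\left(\frac{|x-y|}{|x-z|}\right) \le \eta(t).$$
In the opposite subcase, pick $w' \in \partial G'$ closest to $f(y)$, set $z' = f^{-1}(w')$, and use $|y-z'| \ge \delta_G(y) \ge \delta_G(x)$ together with quasisymmetry at base point $y$ to obtain the same bound.

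Taking $\log(1+\cdot)$ then gives
$$j_{G'}(f(x),f(y)) \le \log\left(1 + \lambda_n \max\{t^\alpha, t^{1/\alpha}\}\right).$$
For $t \ge 1$ the right-hand side is at most $(1/\alpha)\log(1+t) + \log(2\lambda_n)$, which is of the desired form with $c_1 = K^{1/(n-1)}$; for $t \le 1$ it is bounded by the constant $\log(1+\lambda_n)$, which can be absorbed into $d_1$. This yields the inequality claimed in the lemma.

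The main obstacle is the quasisymmetry input itself: the power-type modulus $\eta$ for globally defined $K$-quasiconformal maps of $\Rn$ is not immediate from the analytic definition of quasiconformality but requires modulus-of-rings (Loewner/Teichm\"uller-type) estimates. Once that ingredient is in hand, the case analysis and the passage to logarithms are essentially bookkeeping.
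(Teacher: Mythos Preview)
The paper does not prove this lemma; it is quoted verbatim from \cite[Theorem~4]{GO79} and used as a black box. So there is no in-paper proof to compare against.

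Your argument is correct and is essentially the standard route to this inequality. The key input---that a $K$-quasiconformal self-map of $\Rn$ is globally $\eta$-quasisymmetric with a power-type modulus $\eta(t)\le \lambda_n\max\{t^{\alpha},t^{1/\alpha}\}$---is exactly the distortion estimate that underlies Gehring and Osgood's original proof (it comes from Teichm\"uller-ring modulus bounds, as you note). Once that is granted, your case split is the natural way to compare $|f(x)-f(y)|$ with $\delta_{G'}(f(x))\wedge\delta_{G'}(f(y))$: the nearest boundary point on the image side pulls back to a boundary point on the domain side because $f$ is a self-homeomorphism of $\Rn$, and quasisymmetry at the appropriate base point gives the ratio bound. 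The passage to logarithms and the split $t\ge 1$ versus $t\le 1$ to extract the linear-plus-additive form are routine and correct (for $t\ge 1$ one has $1+\lambda_n t^{1/\alpha}\le 2\lambda_n(1+t)^{1/\alpha}$, giving $c_1=K^{1/(n-1)}$ and $d_1=\log(2\lambda_n)$; for $t\le 1$ the whole expression is bounded by $\log(1+\lambda_n)$).

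One minor point worth making explicit: the nearest boundary point $w\in\partial G'$ exists because $\partial G'$ is a nonempty closed subset of $\Rn$, and $f^{-1}(w)\in\partial G$ since $f$ is a homeomorphism of $\Rn$ mapping $G$ onto $G'$; both facts use that $f$ is defined on all of $\Rn$, which is exactly the hypothesis distinguishing this lemma from the situation in Lemma~\ref{GO-thm3}.
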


In order to investigate the quasiconformal invariance property of $\varphi$-uniform domains
we reformulate Lemma~\ref{GO-thm4} in the form of the following lemma.
We make use of both the above lemmas in the reformulation.

\begin{lemma}\label{j-qc-thm}
For $n\ge 2$ and $K\ge 1$ there exists a constant $C$ depending only on $n$ and $K$ such that
if $f \colon \Rn \to \Rn$ is a $K$-quasiconformal mapping which maps $G$ onto $G'$, then
$$j_{G'}(f(x),f(y))\le C\,\max\{j_G(x,y),j_G(x,y)^\alpha\}
$$
for all $x,y\in G$, where $\alpha=K^{1/(1-n)}$.
\end{lemma}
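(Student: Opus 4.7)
The plan is to split into two cases according to the size of $j_G(x,y)$. Since $\alpha = K^{1/(1-n)} \le 1$, the function $t \mapsto \max\{t, t^\alpha\}$ coincides with $t$ for $t \ge 1$ and with $t^\alpha$ for $t \le 1$; consequently one expects to apply Lemma~\ref{GO-thm4} for large $j_G$ and to combine Lemma~\ref{GO-thm3} with the inequality $j_{G'} \le k_{G'}$ from \eqref{jlek} for small $j_G$.

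\emph{Case 1: $j_G(x,y) \ge c_0$} for a fixed constant $c_0>0$ to be chosen. Here Lemma~\ref{GO-thm4} directly gives
$$j_{G'}(f(x),f(y)) \le c_1 j_G(x,y) + d_1 \le \Bigl(c_1 + \tfrac{d_1}{c_0}\Bigr) j_G(x,y),$$
so the additive constant is absorbed into a multiplicative one, and this is bounded by $C \max\{j_G(x,y), j_G(x,y)^\alpha\}$.

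\emph{Case 2: $j_G(x,y) < c_0$.} Assume without loss of generality $\delta(x) \le \delta(y)$. Choosing $c_0 \le \log(3/2)$ forces $|x-y| \le \delta(x)/2$, so the Euclidean segment $[x,y]$ lies in $B(x,\delta(x)) \subset G$ and along it $\delta(\cdot) \ge \delta(x)/2$. A direct integration therefore yields $k_G(x,y) \le 2|x-y|/\delta(x)$, while the elementary estimate $\log(1+s) \ge s/2$ for $s \in [0,1]$ gives $j_G(x,y) \ge |x-y|/(2\delta(x))$. Hence $k_G(x,y) \le 4\,j_G(x,y)$, and in addition $k_G(x,y) \le 1$. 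Since $\alpha \le 1$ the latter forces $\max\{k_G(x,y), k_G(x,y)^\alpha\} = k_G(x,y)^\alpha$, so Lemma~\ref{GO-thm3} together with \eqref{jlek} yields
$$j_{G'}(f(x),f(y)) \le k_{G'}(f(x),f(y)) \le c\,k_G(x,y)^\alpha \le c\,4^\alpha\, j_G(x,y)^\alpha \le 4c\, \max\{j_G(x,y), j_G(x,y)^\alpha\}.$$

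Taking $C$ to be the larger of the constants produced in the two cases finishes the proof. The only mildly technical step is the comparison $k_G \le 4 j_G$ in the small-$j_G$ regime, which simply reflects the fact that at small scales both metrics are comparable to the Euclidean ratio $|x-y|/\delta(x)$; after that the argument is just bookkeeping of constants, and the two-case split itself is forced by the shape of the right-hand side.
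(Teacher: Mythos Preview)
Your argument is correct and follows essentially the same route as the paper: split according to whether $j_G(x,y)$ is bounded below by $\log\tfrac32$ (equivalently, whether $|x-y|\ge \tfrac12\delta(x)$), absorb the additive constant from Lemma~\ref{GO-thm4} in the large case, and in the small case chain \eqref{jlek} with Lemma~\ref{GO-thm3} and the local comparison $k_G\lesssim j_G$. The only cosmetic difference is that the paper quotes \cite[Lemma~3.7~(2)]{Vu-book} for $k_G(x,y)\le 2\,j_G(x,y)$ when $y\in B(x,\tfrac12\delta(x))$, whereas you re-derive the same estimate by hand with constant~$4$; and the paper keeps both terms in $\max\{k_G,k_G^\alpha\}$ rather than observing $k_G\le 1$ to reduce to $k_G^\alpha$.
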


\begin{proof}
Without loss of generality we assume that $\delta(x)\le \delta(y)$ for $x,y\in G$.
Suppose first that $y\in G\setminus B^n (x,\delta(x)/2)$.
Since $|x-y|\ge \delta(x)/2$, it follows that $j_G(x,y)\ge \log (3/2)$.
By Lemma~\ref{GO-thm4}, we obtain
$$j_{G'}(f(x),f(y))\le \left(c_1+\frac{d_1}{\log (3/2)}\right)\, j_G(x,y)\,.
$$

Suppose then that $y\in B^n(x,\delta(x)/2)$.
By \cite[Lemma 3.7 (2)]{Vu-book}, $k_G(x,y)\le 2j_G(x,y)$.
Hence we obtain that
\begin{align*}
j_{G'}(f(x),f(y))
& \le k_{G'}(f(x),f(y))
 \le c\,\max\{k_G(x,y),k_G(x,y)^\alpha\}\\
& \le 2c \,\max\{j_G(x,y),j_G(x,y)^\alpha\}\,,
\end{align*}
where the first inequality always holds by \eqref{jlek} and the second inequality is due to Lemma~\ref{GO-thm3}.
%
\end{proof}

As a consequence of Lemmas~\ref{GO-thm3} and \ref{j-qc-thm},
we prove our main
result Theorem~\ref{qc-inv-thm} about the invariance property of $\varphi$-uniform
domains under quasiconformal mappings of $\Rn$.

\begin{proof}[Proof of Theorem~\ref{qc-inv-thm}]
By 
Lemma~\ref{j-qc-thm}, there exists a constant $C$ such that
\begin{equation}\label{qc-eq1}
j_G(x,y)\le C\,\max\{j_{G'}(f(x),f(y)),j_{G'}(f(x),f(y))^\alpha\}
\end{equation}
for all $x,y\in G$. Define $\psi(t):= \phi(e^t-1)$. Then
\begin{align*}
k_{G'}(f(x),f(y))
& \le  c\, \max\{k_G(x,y),k_G(x,y)^\alpha\}\\
& \le c\, \max\{\psi(j_G(x,y)),\psi(j_G(x,y))^\alpha\}\\
& \le c\, \max\{\psi(C\,\max\{j_{G'}(f(x),f(y)),j_{G'}(f(x),f(y))^\alpha\}),\\
& \hspace*{2cm} \psi(C\,\max\{j_{G'}(f(x),f(y)),j_{G'}(f(x),f(y))^\alpha\})^\alpha\}\,,
\end{align*}
where the first inequality is due to Lemma~\ref{GO-thm3}, the second inequality holds by hypothesis, and the last
inequality is due to (\ref{qc-eq1}).
Thus, $G'$ is $\varphi_1$-uniform with
$$\varphi_1(t)=c\max\{\psi(C\max\{\log(1+t),\log(1+t)^\alpha\}),
\psi(C\max\{\log(1+t),\log(1+t)^\alpha\})^\alpha\}\,,
$$
where $\alpha=K^{1/(1-n)}$.
\end{proof}

A mapping $f:\, (X_1,d_1)\to (X_2,d_2)$ is said to be $\eta$-quasi-symmetric ($\eta$-QS)
if there exists a strictly increasing homeomorphism $\eta:\,[0,\infty)\to [0,\infty)$ with $\eta(0)=0$
such that
$$\frac{d_2(f(x),f(y))}{d_2(f(y),f(z))}\le \eta\left(\frac{d_1(x,y)}{d_1(y,z)}\right)
$$
for all $x,y,z\in X_1$ with $x\neq y\neq z$. Here $(X_1,d_1)$ and $(X_2,d_2)$ are metric spaces.

Note that $L$-bilipschitz mappings are $\eta$-QS with $\eta(t)=L^2t$ and $\eta$-QS mappings have
the linear dilitation bounded by $\eta(1)$. It is pointed out in \cite{TV80} that quasiconformal mappings are
locally quasi-symmetric.

The following result gives a sufficient condition for $G$ to be a $\varphi$-uniform domain.

\begin{proposition}\label{hksv-qs-prop}
If the identity mapping $id:\, (G,j_G)\to (G,k_G)$ is $\eta$-QS, then $G$ is $\varphi$-uniform
for some $\varphi$ depending on $\eta$ only.
\end{proposition}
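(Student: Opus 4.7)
The plan is to exploit the quasi-symmetry condition by choosing a test point $z\in G$ very close to $y$, so that both $j_G(y,z)$ and $k_G(y,z)$ are bounded above and below by absolute constants; this collapses the three-point QS inequality into a two-point estimate of $k_G(x,y)$ in terms of $j_G(x,y)$.

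More concretely, fix $x,y\in G$ with $x\ne y$ (the case $x=y$ being trivial), and choose any $z\in G$ with $|y-z|=\delta(y)/2$ and $z\ne x$; such a $z$ exists since the sphere of radius $\delta(y)/2$ about $y$ is contained in $G$ and contains infinitely many points. Integrating $|dw|/\delta(w)$ along the segment $[y,z]$, which lies in $B^n(y,\delta(y))\subset G$, gives
\[
 k_G(y,z)\le \int_0^{\delta(y)/2}\frac{ds}{\delta(y)-s}=\log 2.
\]
On the other hand, since $\delta(y)\wedge\delta(z)\ge \delta(y)/2$ (again from $z\in B^n(y,\delta(y))$), one has
\[
 j_G(y,z)=\log\!\Bigl(1+\tfrac{|y-z|}{\delta(y)\wedge\delta(z)}\Bigr)\ge \log\tfrac{3}{2}.
\]

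Now apply the $\eta$-QS hypothesis to the triple $(x,y,z)$:
\[
 k_G(x,y)\le \eta\!\left(\frac{j_G(x,y)}{j_G(y,z)}\right)k_G(y,z)\le \log 2\cdot \eta\!\left(\frac{j_G(x,y)}{\log(3/2)}\right).
\]
Rewriting $j_G(x,y)=\log\bigl(1+|x-y|/(\delta(x)\wedge\delta(y))\bigr)$ and setting
\[
 \varphi(t):=\log 2\cdot \eta\!\left(\frac{\log(1+t)}{\log(3/2)}\right),\qquad t\ge 0,
\]
gives the desired bound $k_G(x,y)\le \varphi\bigl(|x-y|/(\delta(x)\wedge\delta(y))\bigr)$. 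The function $\varphi$ depends only on $\eta$, satisfies $\varphi(0)=0$, and is a strictly increasing homeomorphism of $[0,\infty)$ onto itself because $\eta$ and $t\mapsto \log(1+t)$ are.

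I do not anticipate a genuine obstacle; the only thing to be careful about is the legality of the choice of $z$ (avoiding $z\in\{x,y\}$) and verifying that the resulting $\varphi$ really is a homeomorphism of the kind required by Definition~\ref{phiunif}. Both are straightforward once the geometric choice $|y-z|=\delta(y)/2$ is made.
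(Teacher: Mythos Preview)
Your proof is correct and takes essentially the same approach as the paper's---choose a nearby test point $z$ so that $j_G(y,z)$ and $k_G(y,z)$ are controlled by absolute constants, then collapse the three-point QS inequality; the paper picks $z$ on the radius from $y$ toward the nearest boundary point with $\delta(z)=e^{-1}\delta(y)$, which makes $j_G(y,z)=k_G(y,z)=1$ exactly and yields the cleaner $\varphi(t)=\eta(\log(1+t))$. One minor slip: to obtain $j_G(y,z)\ge\log\tfrac32$ you need $\delta(y)\wedge\delta(z)\le\delta(y)$, not the inequality $\delta(y)\wedge\delta(z)\ge\delta(y)/2$ you cite (that one gives the upper bound $j_G(y,z)\le\log 2$).
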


\begin{proof}
By hypothesis we have
$$\frac{k_G(x,y)}{k_G(y,z)}\le \eta\left(\frac{j_G(x,y)}{j_G(y,z)}\right)
$$
for all $x,y,z$ with $x\neq y\neq z$. Choose $z ~(\neq y)$ such that $\delta(z)=e^{-1}\delta(y)$.
Then
$$j_G(y,z)=k_G(y,z)=\log\left(1+\frac{|y-z|}{\delta(z)}\right)
=\log\left(\frac{\delta(y)}{\delta(z)}\right)=1\,.
$$
Hence, by the hypothesis we conclude that
$$k_G(x,y)\le \eta(j_G(x,y))\,.
$$
This shows that $G$ is $\varphi$-uniform
with $\varphi(t)=\eta(\log(1+t))$.
\end{proof}

\begin{question}
Is the converse of Proposition~\ref{hksv-qs-prop} true\,?
\end{question}



\section{The $\varphi$-uniform domains which are uniform}\label{SecUinf}

A domain $G\subset \Rn$ is said to satisfy the
{\em twisted cone condition},
if for every $x,y\in G$ there exists a rectifiable path $\gamma\subset G$
joining $x$ and $y$ such that
\begin{equation}\label{cone}
\min\{\ell(\gamma[x,z]),\ell(\gamma[z,y])\}\le c\,\delta(z)\quad \mbox{ for all $z\in\gamma$}
\end{equation}
and for some constant $c>0$. Sometimes we call the path $\gamma$ a {\em twisted path}.
Domains satisfying the twisted cone condition are also called John domains (see for instance \cite{GHM89,He99,KL98,NV91}).
If,  in addition,  $\ell(\gamma)\le c\,|x-y|$ holds then
the domain $G$ is uniform. Note that the path $\gamma$ in the
definition of the twisted cone condition
may be replaced by a quasihyperbolic geodesic (see \cite{GHM89}).

We observe from Section~1 that a $\varphi$-uniform domain need not be uniform (or quasi-convex).
Nevertheless, a $\phi$-uniform domain satisfying the
twisted-cone condition is uniform.

\begin{proof}[Proof of Theorem~\ref{phi-unif+cone}]
Assume that $G$ is $\varphi$-uniform and satisfies the twisted cone condition (\ref{cone}).
Let $x,y\in G$ be arbitrary and $\gamma$ be a twisted path in $G$ joining $x$ and $y$.
Choose $x',y'\in\gamma$ such that $\ell(\gamma[x,x'])= \ell(\gamma[y,y']) = \frac1{10}|x-y|$.

Now, by the cone condition, we have
$$\delta(x')\ge \frac{1}{c}\min\{\ell(\gamma[x,x']),\ell(\gamma[x',y])\}
~\mbox{ and }~
\delta(y')\ge \frac{1}{c}\min\{\ell(\gamma[x,y']),\ell(\gamma[y',y])\}.
$$
By the choice of $x'$ and $y'$, on one hand, we see that
$$
\ell(\gamma[x',y])\ge |x'-y|\ge |x-y|-|x-x'|\ge \tfrac9{10} |x-y|.
$$
On the other hand, $\ell(\gamma[x,x'])=\frac1{10}|x-y|$.
The same holds for $x$ and $y$ interchanged. Thus,
\begin{equation}\label{thm3.4-eqn1}
\min\{\delta(x'),\delta(y')\}\ge \tfrac1{10c}|x-y|.
\end{equation}

To complete the proof, our idea is to prove the following three inequalities:
\begin{equation}\label{3-eqns}
\left\{\begin{array}{lll}
k_G(x,x') \le a_1\,j_G(x,x')\le b_1\,j_G(x,y);\\[2mm]
k_G(x',y') \le b_2\,j_G(x,y);\\[2mm]
k_G(y',y) \le a_3\,j_G(y',y)\le b_3\,j_G(x,y)
\end{array}\right.
\end{equation}
for some constants $a_i,b_i$, $i=1,2,3$.
Finally, the inequality
$$k_G(x,y)\le k_G(x,x')+k_G(x',y')+k_G(y',y)\le c\,j_G(x,y)
$$
with $(c=b_1+b_2+b_3)$ would conclude the proof of the theorem.
It is sufficient to show the first two lines in \eqref{3-eqns}, as the third is analogous
to the first.

We start with a general observation:
if $j_G(z,w)< \log \frac 32$, then $z$ lies in the ball $B(w, \frac12 \delta(w))$, and we
can connect the points by the segment $[z,w]\subset G$. Furthermore, due to
\cite[Lemma~3.7~(2)]{Vu-book} $k_G(z,w)\le 2\,j_G(z,w)$.
Thus in each inequality between the $k$ and $j$ metrics, we may assume that
$j_G(z,w)\ge \log \frac 32$ for all $z,w\in G$.

First we prove the second line of (\ref{3-eqns}). Since
$G$ is $\varphi$-uniform and $\varphi$ is an increasing homeomorphism,
\begin{equation*}
k_G(x',y')\le \varphi\left(\frac{|x'-y'|}{\min\{\delta(x'), \delta(y')\}}\right)\le \varphi(12 c),
\end{equation*}
where the triangle inequality $|x'-y'|\le |x'-x|+|x-y|+|y-y'|$ and the relation \eqref{thm3.4-eqn1} are
applied to obtain the second inequality. On the other hand, $j_G(x,y)\ge \log \frac32$. Thus
$$
k_G(x',y')\le b_2\,j_G(x,y)
$$
with $b_2=\varphi(12c)/\log \frac32$.

Then we consider the first line of \eqref{3-eqns}:
$k_G(x,x') \le a_1\,j_G(x,x')\le b_1\,j_G(x,y)$.
The second inequality is easy to prove. Indeed, we have
\begin{align*}
j_G(x,x')
 = \log\left(1+\frac{|x-x'|}{\min\{\delta(x),\delta(x')\}}\right)
 < \log\left(1+\frac{(1+c)|x-y|}{\delta(x)}\right)
 \le (1+c)\,j_G(x,y),
\end{align*}
where the first inequality holds since $|x-x'|\le \frac1{10}|x-y|$ and
$\min\{\delta(x),\delta(x')\}\ge \delta(x)/(1+c)$.

Fix a point $z\in \gamma$ with $\ell(\gamma[x,z]) = \frac12 \delta(x)$ and denote
$\gamma_1=\gamma[x,z]$. Assume for the time being that $x'\not \in \gamma_1$
Clearly, $k_G(x,x') \le k_G(\gamma_1) + k_G(\gamma_2)$, where $\gamma_2=\gamma[z,x']$.
For $w\in \gamma_1$ we have $\delta(w)\ge \frac12 \delta(x)$, and for $w\in \gamma_2$,
the twisted cone condition and the fact $\ell(\gamma[w,y])\ge 9\ell(\gamma[x,w])$ together give
$\delta(w)\ge (1/c)\, \ell(\gamma[x,w])$. Thus we find that
\[
k_G(\gamma_1) \le \frac{\ell(\gamma_1)}{\tfrac12 \delta(x)} = 1
\le
\frac 1{\log \frac 32} j_G(x,x')\le \frac 1{\log \frac 32}(1+c)j_G(x,y).
\]
Furthermore,
\[
k_G(\gamma_2)
\le \int_{\ell(\gamma[x,z])}^{\ell(\gamma[x,x'])} c\frac {dt}{t}
= c \log\frac{\ell(\gamma[x,x'])}{\ell(\gamma[x,z])}
= c \log\frac{\frac1{10}|x-y|}{\frac12 \delta(x)}
\le c  j_G(x,y).
\]
So the inequality is proved in this case.
If, on the other hand, $x' \in \gamma_1$, then we set $z=x'$ and repeat the argument
of this paragraph for $\gamma_1$, since $\gamma_2$ is empty in this case.

This completes the proof of our theorem.
\end{proof}


\section{Complement of $\varphi$-uniform domains}\label{SecComplement}

In \cite[Section~3]{KLSV14} the following question was posed:
Are there any bounded planar $\varphi$-uniform domains
whose complementary domains are not $\varphi$-uniform?
In this section we show that the answer is ``yes''.

\begin{proposition}\label{prop:new}
There exists a bounded $\varphi$-uniform Jordan domain $D\psubset \Rt$ such that
$\Rt\setminus\overline{D}$ is not $\psi$-uniform for any $\psi$.
%
\end{proposition}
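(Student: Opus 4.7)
The plan is to construct $D$ as a disk decorated with infinitely many thin rectangular ``fingers'' sticking outward, whose aspect ratios blow up. Let $B=B(0,1)\subset\Rt$ and, for each integer $n\ge 2$, attach to $B$ a radial finger $F_n$ of length $L_n=1/n$ and half-width $\epsilon_n=1/n^3$, with base centered on $\partial B$ at the angle $\theta_n=1/n$; after rounding the corners, set $D=B\cup\bigcup_{n\ge 2}F_n$. Since $L_n\to 0$ and $\epsilon_n\ll |\theta_n-\theta_{n+1}|$, the fingers shrink fast enough that $\partial D$ is a Jordan curve and $D$ is a bounded Jordan domain.

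To verify that $D$ is $\varphi$-uniform I would prove the stronger linear estimate $k_D(x,y)\le C\,|x-y|/(\delta(x)\wedge\delta(y))$, which yields $\varphi(t)=Ct$. The argument divides into cases by location of $x,y$: bulk--bulk (handled by uniformity of the disk), single-finger (both sides comparable to the path length between $x,y$ divided by $\epsilon_n$), and mixed cases (split the geodesic at the relevant finger bases and add linear estimates). The constant $C$ is uniform in $n$ because each finger has aspect ratio $L_n/\epsilon_n=n^2$, and both sides of the inequality scale linearly in this quantity.

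The heart of the argument is to show that $\Omega:=\Rt\setminus\overline D$ is not $\psi$-uniform for any $\psi$. For each $n$ choose $x_n,y_n\in\Omega$ placed symmetrically at perpendicular distance $2\epsilon_n$ from the midpoint of $F_n$. Then $|x_n-y_n|\asymp \epsilon_n\asymp \delta_\Omega(x_n)\wedge\delta_\Omega(y_n)$, so the ratio $|x_n-y_n|/(\delta_\Omega(x_n)\wedge\delta_\Omega(y_n))$ stays bounded by a universal constant $M$. On the other hand, every curve in $\Omega$ from $x_n$ to $y_n$ must loop around $F_n$: letting $h$ denote the maximum perpendicular distance to the finger attained by such a curve, its quasihyperbolic length is at least $2\log(h/\epsilon_n)+L_n/h+O(1)$ (rising and descending each cost at least $\log(h/\epsilon_n)$, and traversing horizontally past the tip costs at least $L_n/h$). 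Minimizing over $h$ produces
\[
k_\Omega(x_n,y_n)\ \age\ \log(L_n/\epsilon_n)=2\log n\longrightarrow\infty.
\]
Since $\psi(M)<\infty$, the $\psi$-uniformity inequality must fail for large $n$, regardless of $\psi$.

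The main obstacle is the logarithmically divergent lower bound on $k_\Omega(x_n,y_n)$: one needs to rule out not only paths hugging $F_n$ but also competitors that shortcut through the bulk exterior or loop around the entirety of $D$. I would model $\Omega$ near $F_n$ by a slit-plane configuration and apply a standard annular-modulus / extremal-length comparison to the quasihyperbolic metric to handle both kinds of shortcut uniformly.
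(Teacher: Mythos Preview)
Your overall picture---a comb-type Jordan domain whose protrusions have unbounded aspect ratio---is the same as the paper's, and your argument that the complement fails every $\psi$-uniformity is essentially right. But your claim that $D$ itself is $\varphi$-uniform with \emph{linear} $\varphi$ is false for the domain you describe, and this is where your proof breaks. Take $x$ at the tip of $F_n$ and $y$ at the tip of $F_{n+1}$. Then $\delta_D(x)\wedge\delta_D(y)\asymp \epsilon_{n+1}\asymp n^{-3}$, while $|x-y|\asymp |\theta_n-\theta_{n+1}|\asymp n^{-2}$, so the ratio $|x-y|/(\delta_D(x)\wedge\delta_D(y))\asymp n$. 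On the other hand, any curve in $D$ joining $x$ to $y$ must traverse both fingers, and along the axis of $F_n$ one has $\delta_D\asymp\epsilon_n$ over a length $L_n$, giving $k_D(x,y)\gtrsim L_n/\epsilon_n + L_{n+1}/\epsilon_{n+1}\asymp n^2$. Hence $k_D(x,y)/\big(|x-y|/(\delta_D(x)\wedge\delta_D(y))\big)\asymp n\to\infty$, contradicting the linear bound. Your heuristic ``both sides scale linearly in the aspect ratio'' only covers the single-finger case; in the two-finger case the $k_D$ side picks up the \emph{sum} of the aspect ratios while the right-hand side is governed by the \emph{gap} between finger bases, and these do not match linearly.

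The fix is to aim for a superlinear $\varphi$ (for your parameters, $\varphi(t)=Ct^2$ is the natural candidate) and to carry out the mixed-finger case honestly. This is exactly the subtlety the paper's construction is designed around: there the fingers have height $v^k$, width $u^k$, and gaps $t^k$ with $u<t<v$, and the resulting $\varphi$ is $\tau+8\tau^\alpha$ with $\alpha=\log v/\log t>1$; the exponent $\alpha$ is forced by the comparison between the finger height and the inter-finger gap, not by the single-finger aspect ratio. So either redo your $\varphi$-uniformity argument with a quadratic $\varphi$ and a correct treatment of points in different fingers, or switch to geometric scaling of the parameters as in the paper, where the bookkeeping is cleaner.
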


\begin{proof}
Fix $0<u<t< v < 1$.  Let
$R_k = (x_k, x_k+u^k)\times [0, v^k]$ be the rectangle,
$k \ge 1$. The parameter $x_k$ is chosen such that $x_1=0$ and
$x_{k+1}=x_k + u^k + t^k$. At the top of each
rectangle $R_k$ we place a semi-disc $C_k$ with radius $u^k/2$ and center on the midpoint of the
top side of $R_k$.
Set $s=u/(1-u)+t/(1-t)$.
With these elements we define $D$, shown in Figure~\ref{hksv-fig2new},
by
$$
  D:=((0,s)\times (-2,0))\cup (\cup_{k}R_k) \cup (\cup_{k}C_k).
$$
\begin{figure}[H]
\centering
  \includegraphics{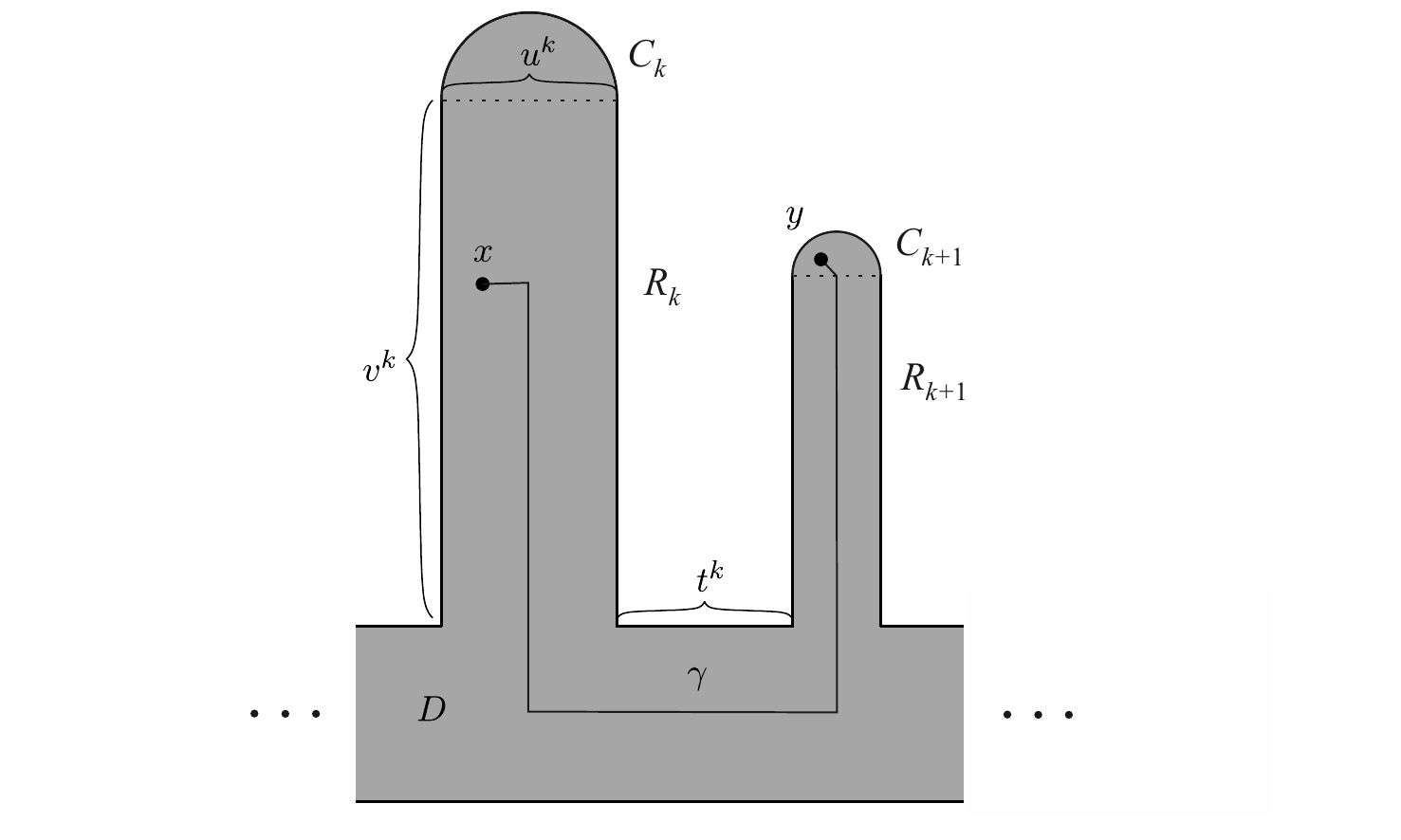}
\caption{The $\varphi$-uniform domain $D$ constructed in the proof of Proposition \ref{prop:new}.\label{hksv-fig2new}}
\end{figure}

Let us show that $D$ is $\phi$-uniform.
For $x\in R_k$ and $y\in R_l$, $k>l$, let
$d:=\min\{\delta(x),\delta(y)\}$. We choose a polygonal path
$\gamma$ as follows: from $x$ the shortest line segment to the
medial axis of
$R_k$, then horizontally at $y=-d$ and finally
from the medial axis of $R_l$ to $y$ along the shortest
line segment (see again Figure~\ref{hksv-fig2new}).
The lengths of the vertical and horizontal parts are at most
$2v^k+d$, $2v^l+d$ and $|x-y|+\frac{u^k+u^l}2$. The line segments
joining $x$ and $y$ to the medial axis have length at most
$u^k/2$ and $v^k/2$. The whole curve is at distance at least
$d$ from the boundary.
Thus
\[
k_D(x,y) \le k_D(\gamma) \le \frac{\ell(\gamma)}d \le
\frac{3v^k+3v^l+2d+|x-y|}{d}
\le \frac{8v^k + |x-y|}{d} .
\]
On the other hand,
\[
\frac{|x-y|}{\delta(x)\wedge \delta(y)} \ge \frac{t^k}{d} \,.
\]
Let $\phi(\tau):= \tau + 8 \tau^\alpha$, where $\alpha$ is such that
$t^\alpha = v$. Then $k_D(x,y) \le \phi(\frac{|x-y|}{\delta(x)\wedge \delta(y)})$. The case when $x,y\in R_k$ or in the base rectangle are
handled similarly, although they are simpler. Thus we conclude
that $D$ is $\phi$-uniform.

We show then that $\Rt\setminus\overline D$ is not $\psi$-uniform
for any $\psi$. We choose $z_k = (x_{k+1}-t^k/2, t^k)$ in the gap
between $R_k$ and $R_{k+1}$. Then
\[
\frac{|z_k-z_{k+1}|}{\delta(z_k)\wedge \delta(z_{k+1})} \le
\frac{t^k/2+u^{k+1}+t^{k+1}/2 + t^k}{t^{k+1}}
=
\frac 3{2t} + \frac 12 + \big(\tfrac u t\big)^{k+1}\le \frac 3{2t}
+\frac 32  .
\]
On the other hand, a curve connecting these points has length at least
$v^k-t^k/2$, so that
\[
k_{\Rt\setminus\overline D}(z_k, z_{k+1})
\ge \int_{t^k/2}^{v^k-t^k/2} \frac{dx}x
= \log \frac{v^k-t^k/2}{t^k/2} \to \infty
\]
as $k\to\infty$. Therefore, it is not possible to find
$\psi$ such that $k_{\Rt\setminus\overline D}(z_k, z_{k+1}) \le \psi(\frac{|z_k-z_{k+1}|}{\delta(z_k)\wedge \delta(z_{k+1})})$,
as claimed.
\end{proof}

\end{document}